\def\titlerunning#1{\gdef\titrun{#1}}
\def\author#1{\gdef\autrun{\def\and{\unskip, }#1}\gdef\@author{#1}}
\def\address#1{{\def\and{\\\hspace*{18pt}}\renewcommand{\thefootnote}{}%
\footnote {#1}}%
\markboth{\autrun}{\titrun}}
\def\email#1{e-mail: #1}
\def\subjclass#1{{\renewcommand{\thefootnote}{}%
\footnote{\emph{Mathematics Subject Classification (2010):} #1}}}
\def\keywords#1{\par\medskip
\noindent\textbf{Keywords.} #1}
\newtheorem{thm}{Theorem}[section]
\newtheorem{lemma}{Lemma}[thm]
\newtheorem{prop}[thm]{Proposition}
\newtheorem{cor}[thm]{Corollary}
\theoremstyle{definition}
\newtheorem{example}[thm]{Example}
\newtheorem{definition}[thm]{Definition}
\newcommand{\C}{\mathbb{C}}
\newcommand{\R}{\mathbb{R}}
\newcommand{\Z}{\mathbb{Z}}
\newcommand{\V}{\mathcal{V}}
\newcommand{\E}{\mathcal{E}}
\newcommand{\W}{\mathcal{W}}
\newcommand{\A}{\mathcal{A}}
\newcommand{\T}{\overline{T}}
\newcommand{\eq}{\stackrel{\sim}{=}}
\newcommand{\id}{\mathrm{id}}
\newcommand{\Arg}{\mathrm{Arg}}
\newcommand{\St}{\mathrm{St}}
\begin{document}

\title{The Milnor fiber of the singularity $f(x,y) + zg(x,y) = 0$}
\author{Baldur Sigur\dh sson}
\date{}

\maketitle

\titlerunning{Description of a Milnor fiber}
\address{B. Sigur\dh sson: Central European University, Budapest and A. R\'enyi Institute of Mathematics, 1053 Budapest,
Re\'altanoda u. 13-15,  Hungary.; \email{sigurdsson.baldur@renyi.mta.hu}}
\subjclass{Primary 32S05, 32S25, 32S55, 58K10;
Secondary 14Bxx,  32Sxx}

\abstract{We give a description of the Milnor fiber and the monodromy of a
singularity of the form $f+zg = 0$ where $f$ and $g$ define plane curves and
have no common components. The description depends only on the topological
type of the two plane curve germs defined by $f$ and $g$. In particular, this
gives a description of the boundary of the Milnor fiber. As a corollary, we
give a simple formula for the monodromy zeta function and the Euler
characteristic of the fiber.}
\keywords{nonisolated hypersurface singularities, Milnor fiber, monodromy
zeta function}

\section{Introduction}

Let $\Phi:(\C^3,0) \to (\C,0)$, $(x,y,z) \mapsto f(x,y) + zg(x,y)$ be a
functinon germ, where 
$f,g:(\C^2,0) \to (\C,0)$. We will only require that $f$ and $g$ have no common
factors.  This singularity is not isolated; the singular set is the $z$-axis.
We determine the diffeomorphism type of the Milnor fiber $F_\Phi$ in terms of a
simultaneous embedded resolution graph of $f$ and $g$, and some information
about the monodromy.
In particular, we get a description of the boundary $\partial F_\Phi$.

Singularities of this type play an important role in the investigations of
sandwiched singularities, as described in \cite{JongStraten}.

In section \ref{s:hypersurf} we recall some topological properties of
hypersurface
singularities with emphasis on non-isolated singularities and the singular
Milnor fibre. We then recall some properties of plane curve singularities.
Finally, we recall the notion of a $4$ dimensional handlebody and fix
some notation for surgery.

In \ref{s:description}
we construct a subset $T_{f,g}$ of a common embedded resolution of 
$f$ and $g$ from tubular neighbourhoods around some divisors.
We obtain the space $F_{f,g}$ by performing surgery along
certain embedded disks in $T_{f,g}$. This surgery does not change the
homotopy type. Our main theorem states that
$F_{f,g}$ has the same diffeomorphism type as Milnor fibre $F_\Phi$.
Furthermore, $F_{f,g}$ can be decomposed into a union of sets on which
the monodromy can be described completely. As a corollary, we obtain
a simple formula for the monodromy zeta function and the Euler characteristic
$\chi(F_\Phi)$.

Section \ref{s:proof} contains a proof of the main statement \ref{th:themain}
from section \ref{s:description}.

\section{Hypersurface singularities} \label{s:hypersurf}
\subsection{General results}
In this subsection we recall some of the general properties of the Milnor fiber
of a holomorphic germ $f:(\C^{n+1},0)\to (\C,0)$, the monodromy associated to
such a germ, and other invariants related to these two.

Let $f:(\C^{n+1},0) \to (\C,0)$ be a hypersurface singularity, denote by
$B_\delta$ the closed ball with radius $\delta$ around the origin in
$\C^{n+1}$, and by $D_\epsilon$ the closed disk around the origin in $\C$ with 
radius $\epsilon$. $D$ will denote an arbitrary closed disk in the complex
plane. We let $V_f = \{ z\in \C^{n+1} : f(z) = 0 \}$ and
$S_f = \{ z \in \C^{n+1} : \partial f = 0 \}$. The link of $f$ is defined as
$K = V_f \cap \partial B_\delta$ for $0 < \delta \ll 1$.  

The Milnor fiber $F_f$ of $f$ is by definition the fiber
$f^{-1}(\epsilon) \cap B_\delta$ for $0 < \epsilon \ll \delta \ll 1$.
Then $F_f$ is a smooth $2n$ dimensional manifold, and so has the homotopy type
of a CW complex. In \cite{Milnor1}, Milnor proves that $F_f$ is homotopy
equivalent to a finite $n$-dimensional CW-complex.
Moreover, if $s$ is the dimension of the singular locus $S_f$, then $F_f$ is
$(n-s-1)$-connected, as proved in \cite{Kato-Matsumoto}.

Let $E =  f^{-1}(\partial D_\epsilon) \cap B_\delta$. The function
$E \to \partial D_\epsilon$, $z\mapsto f(z)$ is a locally trivial fiber bundle
with fiber $F_f$.  If $T = \{ z \in \partial B_\delta : |f(z)| < \epsilon\}$,
we can define another fiber bundle 
$\partial B_\delta \setminus T \to \partial D_1$, $z \mapsto f(z) / |f(z)|$.
These two fiber bundles are isomorphic. In fact, there is a bundle-isomorphism
$E\to \partial B\setminus T$ which restricts to the identity on $\partial T$.
In particular, we have a diffeomorphism
\begin{equation} \label{eq:2nFiber}
  F_f \eq \{ z \in \partial B_\delta \setminus T : f(z) / |f(z)| = 1 \}.
\end{equation}

The singular fiber of $f$ is defined as 
\[
  F_{f,sing} = \{ z : |z| = \delta,\, |f(z)| > 0,\, f(z) / |f(z)| = 1 \} \cup K.
\]
Usually, $F_{f,sing}$ is not a smooth manifold.
By the description \ref{eq:2nFiber} of $F_f$ we have an inclusion
$\iota : F_f \hookrightarrow F_{f,sing}$.  
If $f$ is an isolated singularity, $\iota$ is a homotopy equivalence, as proved
in \cite{Milnor1}. For non-isolated singularities this does generally not hold.

\subsection{The zeta function of the monodromy}

The monodromy of the Milnor fibration is a diffeomorphism $m_f:F\to F$
with the property
that this bundle is isomorphic to the bundle given by
$F\times I/((p,0)\sim (m_f(p),1)) \to I/(0\sim 1)$, $(p,t)\mapsto t$.
The monodromy is determined by the bundle up to isotopy, and the bundle is
determined up to bundle isomorphism by the monodromy. The monodromy induces
linear isomorphisms $h_i : H_i(F;\C) \to H_i(F;\C)$.

We call the product
\[
  \zeta_f(t) = \prod_{i=0}^\infty \det(I - th_i)^{(-1)^{i+1}}
\]
the zeta function associated with the singularity $f$. This product is well
defined because $F$ is a finite CW complex, and so
$\dim_\C H_*(F;\C) < \infty$. The zeta function behaves multiplicatively in the
following sense:

Let $C$ be a subset of $F$ so that $\dim H_*(C;\C) < \infty$ and $m_f$
restricts to a homeomorphism $m_C : C \to C$. Let us call such a subset good
with respect to $m$.  Then $m_C$ induces a linear automorphism $h_{C,i}$ on
$H_i(C;\C)$ and we define
\[
  \zeta_C(t) = \prod_{i=0}^\infty \det(I - th_{C,i})^{(-1)^{i+1}}.
\]
The following propositions are well known:
\begin{prop} \label{prop:zeta_mult}
Assume that $A,B\subset F$ so that $A, B, A\cap B$
are good subsets of $F$ and the interiors of $A$ and $B$ cover $F$.
Then we have $\zeta_f(t) = \zeta_A (t) \zeta_B (t) \zeta_{A\cap B} (t)^{-1}$.
\qed
\end{prop}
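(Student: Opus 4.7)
The plan is to combine the Mayer--Vietoris long exact sequence with an elementary multiplicativity lemma for characteristic polynomials along exact sequences.

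First I would write down the Mayer--Vietoris long exact sequence with $\C$-coefficients associated to the cover by the interiors of $A$ and $B$:
\[ \cdots \to H_i(A\cap B;\C) \to H_i(A;\C) \oplus H_i(B;\C) \to H_i(F;\C) \to H_{i-1}(A\cap B;\C) \to \cdots \]
Since $m_f$ restricts to homeomorphisms on each of $A$, $B$, $A\cap B$ and is itself a homeomorphism of $F$, the induced automorphisms $h_{A\cap B,i}$, $h_{A,i} \oplus h_{B,i}$, and $h_{f,i}$ fit into a commuting ladder with this sequence. The goodness hypothesis on $A$, $B$, $A\cap B$, together with finite-dimensionality of $H_*(F;\C)$, ensures that every term is a finite-dimensional $\C$-vector space and only finitely many are nonzero.

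Next I would invoke the standard algebraic lemma: for any bounded long exact sequence $V_\bullet$ of finite-dimensional $\C$-vector spaces with a compatible endomorphism $h_\bullet$, one has $\prod_i \det(I - th_i)^{(-1)^i} = 1$. The proof is to chop the long exact sequence into short exact sequences $0 \to \mathrm{im}\, d_{i+1} \to V_i \to \mathrm{im}\, d_i \to 0$, to note that for any short exact sequence of vector spaces with compatible endomorphism the characteristic polynomial factors as the product over the outer terms (a block-triangular determinant computation), and then to telescope the resulting identities.

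Applied to the Mayer--Vietoris sequence equipped with the monodromy endomorphism, and using $\det(I - t(h_{A,i} \oplus h_{B,i})) = \det(I - th_{A,i}) \cdot \det(I - th_{B,i})$, this identity among alternating products rearranges directly to $\zeta_f(t) = \zeta_A(t) \zeta_B(t) \zeta_{A\cap B}(t)^{-1}$. The main obstacle is just careful sign bookkeeping, in particular tracking how the shift by $1$ in the Mayer--Vietoris connecting homomorphism interacts with the exponent convention $(-1)^{i+1}$ in the definition of $\zeta$, and confirming that the inverse appears on $\zeta_{A\cap B}$ rather than on one of the other factors. Beyond that the argument is entirely formal.
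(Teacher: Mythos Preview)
Your argument is correct and is the standard proof of this multiplicativity statement via Mayer--Vietoris together with the multiplicativity of $\det(I-th)$ along short exact sequences. The paper itself omits the proof entirely, stating the proposition as well known (note the inline \qed), so there is nothing further to compare.
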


\begin{prop} \label{prop:zeta_char}
We have $\chi(F) = \nu(\zeta_f)$, where $\nu:\C(t)^\times\to\Z$ is the
valuation at infinity.
\qed
\end{prop}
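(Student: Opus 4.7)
The plan is a direct calculation exploiting the fact that the valuation $\nu$ at infinity is a group homomorphism $\C(t)^\times \to \Z$, so it converts the defining product of $\zeta_f$ into an alternating sum. First I would observe that each factor $\det(I - th_i)$ is a polynomial in $t$ whose degree equals $\dim_\C H_i(F;\C) = b_i(F)$: its top-degree coefficient is $(-1)^{b_i}\det h_i$, and this is nonzero because $h_i$ is a linear isomorphism. Under the standard convention that a nonzero polynomial of degree $d$ has valuation $-d$ at infinity (as one checks from $\nu(t^d)=-d$ in the local ring of $\mathbb{P}^1$ at $\infty$), this gives $\nu\bigl(\det(I-th_i)\bigr) = -b_i(F)$.

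Applying $\nu$ termwise to the defining product then yields
\[
  \nu(\zeta_f) \;=\; \sum_{i=0}^\infty (-1)^{i+1}\,\nu\bigl(\det(I-th_i)\bigr) \;=\; \sum_{i=0}^\infty (-1)^{i}\, b_i(F) \;=\; \chi(F),
\]
the sum being finite because $F$ has the homotopy type of a finite CW complex (as recalled from Milnor's theorem), so only finitely many $b_i(F)$ are nonzero. There is no real obstacle here; the only thing to verify carefully is that the sign convention for $\nu$ is compatible with the exponent $(-1)^{i+1}$ appearing in the definition of $\zeta_f$, after which the identity $\chi(F)=\sum_i (-1)^i b_i(F)$ closes the argument.
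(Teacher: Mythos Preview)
Your argument is correct and is the standard one-line computation. The paper itself offers no proof of this proposition: it is stated with a terminal \qed\ as a well-known fact, so there is nothing to compare your approach against.
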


The monodromy $m_f$ can be extended to a homeomorphism
$m_{f,sing}:F_{f,sing} \to F_{f,sing}$, which is called the singular monodromy.
In fact, by defining $F_{f,sing,\theta}$ in the same way as $F_{f,sing}$, only
replacing the condition $f/|f|=1$ by $f/|f|=\theta$, we get a subspace
$\cup_\theta F_{f,sing,\theta} \times \{\theta\} \subset S^{2n+1}\times S^1$.
The projection onto $S^1$ is a locally trivial fiberbundle with fiber
$F_{f,sing}$; its monodromy is the singular monodromy.

%
%
%
%

\subsection{Plane curves} \label{ssec:pl_c}
In the case $n = 1$, $f$ is a plane curve singularity. For a detailed
introduction, see \cite{Wall}.
Write $f = f_1^{\alpha_1} f_2^{\alpha_2} \cdots f_k^{\alpha_k}$
where $f_1,\ldots, f_k$ are the $k$ different irreducible factors of $f$. 
In this case, $K$ is a link in $\partial B_\delta$.
Let $T$ be a tubular neighbourhood around $K$ and $\T$ the corresponding closed
tubular neighbourhood. There exists a projection
$c:\T \to K$ which is a trivial $D$-bundle, this is just the normal bundle
of the link.
Further, write $K = \cup_{i=1}^k K_i$, where
$K_i = \{z \in \partial B_\delta : f_i = 0 \}$, and
$T = \cup_{i=1}^k T_i$, where $T_i$ is the component of $T$ containing $K_i$.
Choosing $\epsilon > 0$ small enough, we can choose
$T = \{ z \in \partial B_\delta : |f(z)| < \epsilon \}$. Then
$\partial F_f \subset \partial \T$. The projection $c$ can be chosen in
such a way that the restriction
$c_i = c|_{F_f\cap \partial \bar T_i}:F_f\cap \partial T_i \to K_i$ 
is a covering map. This map can be described in terms of the embedded
resolution graph of $f$; we recall some of its properties.

Let $\Gamma_f = (\V,\E)$ be the embedded resolution graph of some fixed
embedded resolution of $f$ (see \cite{Wall} for definition and properties).
Here $\V$ is the set of vertices and $\E$ the set
of edges. Write $\V = \W \amalg \A_f$ where $\A_f$ consists of the arrowhead
vertices of $\Gamma$ and $\W$ consists of the nonarrowhead vertices. The
elements of $\A_f$ correspond to the branches of $f$ so there is a natural
correspondence between the arrowhead vertices of $\Gamma$ and the components
of $K$. We will make no distinction between the indices $i=1,\ldots,k$ and
the corresponding $a\in\A_f$.

For each $a \in \A_f$
there exists a unique $w_a \in \W$ so that $(w_a,a) \in \E$. The map $f$ has
multiplicity $\alpha_a$ on $a$, let $m_{w_a}$ be its multiplicity on $w_a$.
Then $F_f \cap T_a$ has $(\alpha_a,m_{w_a})$ components, and restricting $c_a$
to any of these components gives a covering of degree $\alpha_a/(\alpha_a,m_a)$.
The singular fiber $F_{f,sing}$ of $f$ is homeomorphic to the space
$F_f / \sim$ where the equivalence relation $\sim$ is given by
$z_1 \sim z_2$ if and only if $z_1, z_2 \in F_f \cap T_i$ for some $a$, and
$c_a(z_1) = c_a(z_2)$.

The monodromy $m_f:F_f\to F_f$ can be chosen so that it preserves this
equivalence relations, that is, $x_1\sim x_2$ if and only if
$m(x_1) \sim m(x_2)$. Therefore, we get a homeomorphism
$F_{f,sing} \to F_{f,sing}$ induced by the monodromy. This homeomorphism
coincides with the singular monodromy already constructed. Note that
$F_{f,sing} = F_f\cup B$ where both
$B$ and $F_f\cap B$ are homotopically equivalent to the disjoint union of copies
of $S^1$(here, the set $B$ is a disjoint union of sets of the form
$S^1\times R$ where $R$ is a union of segments in the plane with one
endpoint at the origin).
Moreover, these homotopy $S^1$'s contract to actual copies of
oriented $S^1$'s. The singular monodromy $m_{f,sing}$ restricts
to a homeomorphism
$F_f\to F_f$ which coincides with the monodromy $m_f$. Also, $m_{f,sing}$
permutes
the connected components of $B$ and $F_f\cap B$, respecting the orientation
on the first homology. Thus, the induced maps on the homologies of $B$ and
$F_f\cap B$ are zero in degree $>1$, and can be represented by the same
permutation matrix in degrees zero and one. These cancel out
to give
$\zeta_B(t) = \zeta_{F_f\cap B}(t) = 1$, and therefore, by \ref{prop:zeta_mult},
\begin{prop} \label{prop:reg_sing}
If $f:(\C^2,0)\to(\C,0)$ defines a plane curve singularity, then
$\zeta_f(t) = \zeta_{f,sing}(t)$.
\end{prop}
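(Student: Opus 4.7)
The proof strategy is to apply the multiplicativity property of the zeta function (Proposition \ref{prop:zeta_mult}) to the decomposition $F_{f,sing}=F_f\cup B$ supplied in the preceding discussion, and show that both $\zeta_B(t)$ and $\zeta_{F_f\cap B}(t)$ are trivial. The main work is already packaged in the paragraph preceding the proposition; I would just organize it as follows.

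First, I would check that Proposition \ref{prop:zeta_mult} actually applies. One must verify that $F_f$, $B$, and $F_f\cap B$ are good subsets of $F_{f,sing}$ with respect to the singular monodromy $m_{f,sing}$, and that the interiors of $F_f$ and $B$ cover $F_{f,sing}$. Goodness is immediate from the construction: $m_{f,sing}$ extends $m_f$, restricts to $m_f$ on $F_f$, and permutes the components of $B$ and of $F_f\cap B$; in each case we have a self-homeomorphism of a space whose rational homology is finite-dimensional (since each of these spaces is homotopy equivalent to a finite disjoint union of circles or to the Milnor fiber). The covering condition is a mild technical point: $B$ is the collar-like part sitting near $K$, while $F_f$ meets this collar in a slightly fattened strip, so after replacing $F_f$ and $B$ by small open neighbourhoods of themselves inside $F_{f,sing}$ the interiors do cover. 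This replacement does not affect the zeta functions because both $F_f$ and $B$ deformation retract equivariantly onto themselves.

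Second, I would compute $\zeta_B(t)$ and $\zeta_{F_f\cap B}(t)$. By the statement preceding the proposition, each of $B$ and $F_f\cap B$ is homotopy equivalent to a disjoint union of oriented circles, and $m_{f,sing}$ permutes those circles preserving their orientations. Let $P$ be the permutation matrix representing this action on the set of components. Then $h_{B,0}$ and $h_{B,1}$ are both represented by $P$ (in degree zero because we simply permute components, and in degree one because the circles are permuted with orientation, so each oriented generator of $H_1$ is sent to another); all higher $h_{B,i}$ vanish. Therefore
\[
\zeta_B(t)=\det(I-tP)^{-1}\cdot\det(I-tP)=1,
\]
and the identical argument gives $\zeta_{F_f\cap B}(t)=1$.

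Finally, applying Proposition \ref{prop:zeta_mult} to the covering $F_{f,sing}=F_f\cup B$ yields
\[
\zeta_{f,sing}(t)=\zeta_{F_f}(t)\,\zeta_B(t)\,\zeta_{F_f\cap B}(t)^{-1}=\zeta_f(t)\cdot 1\cdot 1=\zeta_f(t),
\]
as claimed. The only genuine subtlety is justifying the claim that under the homotopy equivalence from $B$ (respectively $F_f\cap B$) to the disjoint union of circles, the singular monodromy is conjugated to a map whose induced homology actions in degrees $0$ and $1$ are represented by the \emph{same} permutation matrix; this is the orientation-preservation assertion already made in the text, and it is the one point I would expand carefully if giving a fully detailed proof.
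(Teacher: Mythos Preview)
Your proposal is correct and follows essentially the same approach as the paper: decompose $F_{f,sing}=F_f\cup B$, observe that $B$ and $F_f\cap B$ are each homotopy equivalent to a finite disjoint union of oriented circles permuted (with orientation) by the singular monodromy so that the degree-$0$ and degree-$1$ actions are given by the same permutation matrix and hence $\zeta_B=\zeta_{F_f\cap B}=1$, then apply Proposition~\ref{prop:zeta_mult}. The only difference is that you spell out the verification of the hypotheses of Proposition~\ref{prop:zeta_mult} more carefully than the paper does.
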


\subsection{Handles and surgery} \label{pre:framings}

We will use handles to describe the Milnor fiber. More precisely, we will use
$4$ dimensional handles of index $2$ in our construction.
Chapter 4 of \cite{GS} gives a good presentation of the necessary theory.

Let $X$ be a $4$-manifold with boundary and
$\phi:(\partial D)\times D \to \partial X$ an
embedding. We obtain a new manifold $X\cup_\phi D\times D$ by taking the
disjoint union $X\amalg(D\times D)$ and then identifying any point
$x\in (\partial D) \times D$ with $\phi(x) \in \partial X$. The map $\phi$
induces an isomorphism between the normal bundles of $(\partial D)\times \{0\}$
in $(\partial D) \times D$ and $\phi((\partial D)\times \{0\})$ in $\partial X$.
Since $(\partial D)\times \{0\} \subset (\partial D)\times D$ already comes
with a canonical framing, this isomorphism can be specified by a framing on
$\phi((\partial D)\times \{0\})$. The diffeomorphism type of the resulting
manifold is determined by the following data (see for example \cite{GS}):

\begin{itemize}
\item
	The embedding $\phi|_{(\partial D)\times \{0\}}$ of
	$(\partial D)\times \{0\} \eq S^1$ into $\partial X$.
\item
	The framing of the normal bundle of $\phi|_{(\partial D)\times \{0\}}$.
\end{itemize}

We will now fix some notation for surgery along embedded
disks. We will assume that all maps respect orientation when appropriate.
Let $X$ be an oriented $4$ dimensional manifold with boundary and
$\iota:\bar D\hookrightarrow X$
an embedding of the closed disk. We assume that the boundary
$\partial\bar D$ is
embedded into the boundary $\partial X$, and that $\iota(\bar D)$
is transversal to
$\partial X$. We can find a parametrisation
$\psi:\bar D\times \bar D \to X$ of a closed tubular neighbourhood of
$\iota(\bar D)$ so that $\psi(0,z) = \iota(z)$, and
$\psi|_{\bar D\times \partial \bar D}$ is a parametrisation of a tubular
neighbourhood of $\iota(\partial\bar D)\subset\partial X$. Define
$X' = X \setminus \psi(D\times\bar D)$. For $k\in\Z$ let
$X_{\iota,k} = X'\amalg_{t_k} \bar D\times \bar D$, where the glueing map
$t_k:\bar D\times\partial\bar D\to X'$ is given by
$t_k(x,y) = \psi(x,x^ky)$.

\begin{definition}
We call $X_{\iota,k}$ constructed above the $k$-th twist of $X$ along
$\iota(\bar D)$.
\end{definition}

Note that $X_{\iota,k}$ is obtained by thinking of $\psi(\bar D\times \bar D)$
as a handle, removing it, and then attaching it again via a different
glueing map. This construction is very similar to Dehn surgery. In fact,
$\partial X_{\iota,k}$ is nothing else than $\partial X$, to which
a Dehn surgery with coefficient $1/k$ has been applied along
$\iota(\partial \bar D)$.

\section{Description of the fiber} \label{s:description}

Let $f, g:(\C^2,0) \to (\C,0)$ be any plane curve singularities without common
factors and define
\[
  \Phi(x,y,z) = f(x,y) + zg(x,y).
\]
Consider a fixed common embedded resolution $\phi:V\to \C^2$ of $f$ and $g$.
The resolution graph of this embedded resolution will be denoted by $\Gamma$.
Denote its set of vertices by $\V(\Gamma)$ and the set of edges by $\E(\Gamma)$.
We write $\V(\Gamma) = \W(\Gamma) \amalg \A(\Gamma)$ where $\W(\Gamma)$ is the
set of non-arrowhead vertices and $\A(\Gamma)$ the set of arrowhead vertices.
We decompose $\A(\Gamma)$ further as
$\A(\Gamma) = \A_f(\Gamma) \amalg \A_g(\Gamma)$, where the elements of
$\A_f(\Gamma)$ and $\A_g(\Gamma)$ correspond to components of the strict
transform of $f$ and $g$ respectively. A vertex $v\in\V(\Gamma)$ corresponds to
a component $E_v$ of the exceptional divisor $\phi^{-1}(0)$, or the strict
transform of $f$ or $g$. In each case, we denote by $m_v$ the multiplicity of
$f$ on $E_v$, and $l_v$ the multipilcity of $g$ on $E_v$. In particular,
$m_v = 0$ if and only if $v\in \A_g$ and $l_v = 0$ if and only if $v\in A_f$.

Let $f' = f\circ \phi$, $g' = g\circ \phi$ and 
$F'_f = (f')^{-1}(\epsilon) \cap \phi^{-1}(B_\delta) = \phi^{-1}(F_f)$.
The map $V\setminus \phi^{-1}(0) \to \C^2\setminus \{(0,0)\}$, $r\to \phi(r)$ is
a diffeomorphism. In particular, it restricts to a diffeomorphism
$F'_f \to F_f$.

We have a map $\phi\times \id_\C : V \times \C \to \C^3$ which restricts to a
diffeomorphism
$(V\setminus \phi^{-1}(0))\times \C \to \C^3 \setminus \{(0,0,z) : z\in \C\}$.
We set
$\Phi' = \Phi\circ(\phi\times \id_\C)$, and $F' = (\phi\times \id_\C)^{-1}(F)$. Clearly, $F'$ is diffeomorphic to $F$.

For each $w\in \W$, choose a small tubular neigbourhood $T_w$ around $E_w$ in
$V$ and a map $b_w : T_w \to E_w$ which is a smooth open disk bundle. Denote
by $\T_w$ the corresponding closed tubular neighbourhood. These can be chosen
so that they satisfy the following property:

If $w,w' \in \W$ and $(w,w') \in \E$, then we have
$b_w^{-1} (E_w \cap E_{w'}) = E_{w'} \cap T_w$ and
$b_w^{-1}(E_w \cap T_{w'}) = T_w \cap T_{w'}$. If $w\in \W$, $a\in \A$ and
$(w,a) \in \E$, then $b_w^{-1}(E_w\cap E_a) = E_a \cap T_w$.  Then the set
$T = \cup_{w\in\W} T_w$ is the plumbed 4-manifold with plumbing graph $\Gamma$.

If $w,w'\in \W(\Gamma)$ and $e = (w,w') \in \E(\Gamma)$, then we let
$T_e = T_w \cap T_{w'}$. If $w\in \W(\Gamma)$ and $a\in\A(\Gamma)$ so that
$e = (w,a) \in \E(\Gamma)$, then we pick a small disk-shaped neighbourhood
$U_a$ in $E_w$ around $E_w \cap E_a$ and let $T_a = T_e  = b_w^{-1}(U_a)$.
Then $T_a$ is a tubular neighbourhood around $E_a$ in $T$.

The Milnor fiber $F_\Phi$ can be described in terms of the embedded resolution
graph $\Gamma$, with the additional arrowhead vertices, and all vertices
decorated by the multiplicities of $f'$ and $g'$. This description will depend
on which of the
two functions $f'$ and $g'$ has higher multiplicities on the exceptional
divisors. The following definition makes this precise.
\begin{definition}
\begin{itemize}
\item
Let $\W_1 = \{w\in\W(\Gamma) : m_w \leq l_w \}$ and $\W_2 = \W\setminus \W_1$.
Let $\Gamma_i$ be the subgraph of $\Gamma$ generated by the set $\W_i$.
Define $T_i = \cup_{w\in\W_i} T_w$. 
\item
Let $\A_{f,i} = \{ a\in\A_f : w_a \in\W_i\}$ and
$T_{f,i} = \cup_{a\in\A_{f,i}} T_a$.
Repeat this with $f$ replaced by $g$.
\item
Choose a small $\epsilon > 0$ and let $T_{\epsilon}$ be a small tubular
neighbourhood around $f'^{-1}(\epsilon) \cap \T$.
\item
Let $T'$ be a small tubular neighbourhood around the exceptional divisor
inside $T$. This is chosen after choosing $\epsilon$. In particular,
$\T'\cap \T_\epsilon = \emptyset$.
\item
Let $\T_{f,g} = [\T_{f,1}\setminus T'] \cup \T_\epsilon \cup
[\T_2 \setminus (T' \cup T_{g,2})]$, where $\overline{\phantom{T}}$ denotes
closure.
\item
Let $T'_g$ be a tubular neighbourhood around the strict transform of $g$,
chosen small with respect to the above.
\end{itemize}
\end{definition}

\begin{definition}
We define $F_{f,g}$ to be a twisting of $T_{f,g}$ along the strict
transform of $g$. More
precisely, for any $a\in\A_g$, the set $E_a\cap T_{f,g}$ is a union of
$m_w$ disks embedded in $T_{f,g}$ as in \ref{pre:framings}, where $w\in\W$ so
that $(a,w)\in\E$. Take the $l_a$-th twist along each of these disks.
\end{definition}

\begin{thm} \label{th:themain}
The Milnor fibre $F_\Phi$ is diffeomorphic to the space $F_{f,g}$ constructed
above.
The monodromy can be chosen to satisfy the following
\begin{itemize}
\item
The set $\T_{f,1}\setminus T'$ is invariant under $m_\Phi$ and the
restriction is homotopic to the identity.
\item
We have $m_\Phi|_{F_f} = m_f$
\item
The set $T_2\setminus(T'\cup\T_{g,2})$ is invariant under $m_\Phi$
and the restriction is homotopic to the identity.
\item
For any $a\in\A_{g,2}$, the monodromy $m_\Phi$ permutes the $m_{w_a}$ handles
corresponding to $a$ cyclically.
\end{itemize}
\end{thm}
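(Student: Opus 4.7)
The plan is to pull back via $\phi\times\id_\C$ and work entirely in $V\times\C$. Since $\phi\times\id_\C$ is a biholomorphism away from $\phi^{-1}(0)\times\C$, there is a natural diffeomorphism $F_\Phi \eq F'$ with $F' = (\Phi')^{-1}(\eta)\cap N$, for a suitable Milnor-type neighborhood $N$ of $\phi^{-1}(0)\times\{0\}$ and $\Phi' = f' + zg'$. I then decompose $N$ according to the plumbing $\T=\cup_{w\in\W} \T_w$, crossed with a small $z$-disk, and analyze $F'$ piece by piece, matching each with a piece of $T_{f,g}$.

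On each $\T_w$, choose local coordinates $(s,t)$ with $E_w=\{t=0\}$; then $f' = u\,t^{m_w}$ and $g' = v\,t^{l_w}$ away from adjacent divisors or strict transforms (picking up extra factors of $s$ at intersections). For $w\in\W_1$ (so $m_w\le l_w$), the equation $\Phi'=\eta$ reads $t^{m_w}(u + zv\,t^{l_w-m_w}) = \eta$, which has the same implicit-function structure as $f'=\eta$; the fiber piece is a disk bundle in $z$ over $F_f\cap \T_w$, and after contracting the $z$-direction it gives $\T_{f,1}\setminus T'$. For $w\in\W_2$ (so $m_w > l_w$), away from $\{g'=0\}$ one solves $z=(\eta - f')/g'$, giving a graph whose projection to $V$ identifies this piece of $F'$ with $\T_2\setminus(T'\cup T_{g,2})$. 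The transition zone $\{|f'|\sim\epsilon\}$ becomes $\T_\epsilon$, which via (\ref{eq:2nFiber}) is identified with a tubular neighborhood of the Milnor fiber of $f$ inside $F_\Phi$.

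The interesting piece is near each arrowhead $a\in\A_g$ with parent $w=w_a$. In local coordinates with $E_w=\{t=0\}$ and $E_a=\{s=0\}$, write $f' = u\,t^{m_w}$, $g' = v\,s^{l_a}\,t^{l_w}$. Then $\Phi'=\eta$ becomes $u\,t^{m_w} + z\,v\,s^{l_a}\,t^{l_w} = \eta$, and near each of the $m_w$ roots $t_j$ of $u\,t^{m_w}=\eta$ this linearizes to a normal form $z\,s^{l_a} \approx c_j\,(t-t_j)$. This is a standard model of a $2$-handle attached along a disk $\{s=0\}$ in $T_{f,g}$, with attaching framing differing from the product framing by exactly $l_a$, which is the content of the $l_a$-th twist of Section \ref{pre:framings}. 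Over all $m_w$ roots $t_j$ one obtains precisely the $m_w$ disks claimed in the definition of $F_{f,g}$.

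The monodromy statements follow by tracking $\eta\mapsto e^{2\pi i\theta}\eta$ through these local computations: on $\W_1$ pieces and on $(\W_2$ minus $T_{g,2})$ pieces, the implicit solutions deform by an ambient isotopy that is trivial after retracting in $z$ or after projecting to $V$; on $F_f = F'\cap\{z=0\}$ the restriction coincides with $m_f$ by inspection; and on the $m_w$ handles near $a\in\A_{g,2}$, the cyclic permutation of the roots $t_j$ induces the stated cyclic permutation of handles. I expect the main obstacle to be the framing computation: verifying that the Dehn-surgery coefficient extracted from the normal form $z\,s^{l_a}\approx c(t-t_j)$ is exactly $l_a$, with the correct sign under the conventions of Section \ref{pre:framings}, requires a careful bookkeeping of normal bundles and orientations on both sides of the identification.
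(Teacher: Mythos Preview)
Your outline is essentially the paper's own approach: pull back to $V\times\C$, decompose along the plumbing pieces, analyze each piece in local coordinates, and compute the handle framings at the $g$-arrowheads. The paper organizes this slightly differently: rather than treating the $\W_1$ piece as a $z$-disk bundle over $F_f$, it uniformly projects $F'_\Phi$ to $V$ via $p$ and studies the image as the sublevel set $\{|Z|\le\delta\}$ of the single function $Z(r)=(\epsilon-f'(r))/g'(r)$, then does Morse--Bott analysis of $|Z|^2$ (or $|Z|^{-2}$ over $\T_2$) to identify each piece with the corresponding piece of $\T_{f,g}$. One place where your sketch is too quick: near an arrowhead $a\in\A_{f,1}$ the set $p(F'_\Phi)\cap\T_a$ is \emph{not} just a tubular neighbourhood of $F'_f\cap\T_a$ (the fibre $F'_f$ runs off along the strict transform $E_a$), so the ``disk bundle in $z$'' picture does not immediately give $\T_{f,1}\setminus T'$; the paper treats this case separately with its own Morse--Bott argument after passing to a branched cover $\tilde v=v^n$. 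Your identification of the framing computation as the crux is accurate, and the paper carries it out exactly along the lines you propose.
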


\begin{cor} \label{cor:eulerchar}
\begin{enumerate}[(i)]
\item
The Euler characteristic of $F$ is given by the formula
\[
  \chi(F_\Phi) = \sum_{w\in\W_1} m_w(2-\delta_{w,f}) 
      + \sum_{a\in\A_{g,2}} m_{w_a}.
\]

\item
The zeta function associated to $\Phi$ is given by the formula
\begin{equation} \label{cor:eulerchar:2}
  \zeta_\Phi(t) =
    \left(\prod_{w\in\W_1}     (1-t^{m_w}    )^{\delta_{w,f}-2} \right)
    \left(\prod_{a\in\A_{g,2}} (1-t^{m_{w_a}})^{-1}             \right)
\end{equation}
where $\delta_{w,f}$ is the number of vertices in $\W\cup\A_f$ connected to $w$
by an edge.
\end{enumerate}
\end{cor}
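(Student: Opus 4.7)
The plan is to prove (ii) by combining Theorem \ref{th:themain} with the multiplicativity of the zeta function (Proposition \ref{prop:zeta_mult}), and then deduce (i) from (ii) via Proposition \ref{prop:zeta_char}.

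The first step is to identify $F_\Phi \eq F_{f,g}$ with the union of the pieces provided by Theorem \ref{th:themain} and to compute the zeta function on each one. On $\T_{f,1}\setminus T'$ and on $\T_2\setminus(T'\cup T_{g,2})$ the monodromy is homotopic to the identity, so the induced action on homology is trivial and each piece contributes a factor of the form $(1-t)^{-\chi}$. On $\T_\epsilon$, which deformation retracts to the Milnor fibre $F_f$ and on which $m_\Phi$ restricts to $m_f$, the contribution is $\zeta_f(t)$, and A'Campo's formula applied to the common resolution graph $\Gamma$ yields
\[
  \zeta_f(t) \;=\; \prod_{w\in\W} (1-t^{m_w})^{\delta_{w,f}-2}
\]
(the $\A_f$-arrowheads contribute trivially because their open part is a punctured disk with vanishing Euler characteristic). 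Finally, for each $a\in\A_{g,2}$ the handles produced by the twisting consist of $m_{w_a}$ contractible pieces cyclically permuted, so the cyclic permutation matrix $P$ on $H_0$ satisfies $\det(I-tP) = 1-t^{m_{w_a}}$ and the contribution is $(1-t^{m_{w_a}})^{-1}$.

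The second step is the bookkeeping via Proposition \ref{prop:zeta_mult}. The pairwise intersections of the bulk pieces are bundles of circles over points of the form $E_w \cap E_{w'}$ (compare the description of $\partial F_f$ in Section \ref{ssec:pl_c}), on which the monodromy is either homotopic to the identity or acts as a cyclic permutation of order $m_w$ for $w\in\W_2$. The latter contributions cancel exactly the $w\in\W_2$ factors of $\zeta_f$, while the identity-monodromy contributions and their residual Euler characteristic exponents collapse by additivity, leaving only
\[
  \zeta_\Phi(t) \;=\; \left(\prod_{w\in\W_1}(1-t^{m_w})^{\delta_{w,f}-2}\right)\left(\prod_{a\in\A_{g,2}}(1-t^{m_{w_a}})^{-1}\right),
\]
which is formula (\ref{cor:eulerchar:2}). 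Part (i) then follows immediately from Proposition \ref{prop:zeta_char} and the identity $\nu\bigl((1-t^k)^e\bigr) = -ke$, which gives $\chi(F_\Phi) = \sum_{w\in\W_1} m_w(2-\delta_{w,f}) + \sum_{a\in\A_{g,2}} m_{w_a}$.

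The main obstacle is the combinatorial bookkeeping in the second step: one must identify which circles in the overlaps carry cyclic monodromy of what order, and verify exactness of the cancellation against the $\W_2$-factors of $\zeta_f$. This should follow from the local description of $\Phi'$ near each exceptional divisor, which factors as $u^{m_w}(\tilde f + zu^{l_w-m_w}\tilde g)$ on $\W_1$ and as $u^{l_w}(u^{m_w-l_w}\tilde f + z\tilde g)$ on $\W_2$, distinguishing the two regimes and fixing the relevant multiplicities.
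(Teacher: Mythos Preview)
Your overall strategy---prove (ii) via Theorem~\ref{th:themain} and Proposition~\ref{prop:zeta_mult}, then deduce (i) from Proposition~\ref{prop:zeta_char}---matches the paper's. The difference is in the decomposition. The paper splits $F_{f,g}$ into only two pieces, $\T_{f,g}\setminus T_2$ and $\T_{f,g}\cap\T_2$. On the first, an argument as in \S\ref{ssec:pl_c} reduces the zeta function to that of $m_f$ on $F_f\cap\T_1\setminus T_2$, and A'Campo applied to $\Gamma_1$ alone gives $\prod_{w\in\W_1}(1-t^{m_w})^{\delta_{w,f}-2}$ directly. On the second, $\chi=0$ and the cyclically permuted handles give the $\A_{g,2}$ product. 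Their intersection is a union of circles permuted cyclically, contributing $1$. No cancellation of $\W_2$ factors is needed, because they never appear.

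Your finer decomposition introduces those factors and then tries to cancel them, and this is where the proposal has a genuine gap. Your description of the pairwise intersections as ``bundles of circles over points $E_w\cap E_{w'}$'' is incorrect: the overlap $\T_\epsilon \cap [\T_2\setminus(T'\cup T_{g,2})]$ is a tubular neighbourhood of the \emph{surface} $F_f\cap\T_2$, not a union of circles. On that surface $m_\Phi$ restricts to $m_f$, and its zeta function is (by A'Campo restricted to $\Gamma_2$) the product $\prod_{w\in\W_2}(1-t^{m_w})^{\delta_{w,f}-2}$, which is exactly the factor you need to cancel---but this must be established, not asserted, and once you do so you have essentially reproduced the paper's two-piece argument inside your overlap computation. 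A second issue is that Theorem~\ref{th:themain} does not say $\T_\epsilon$ is $m_\Phi$-invariant (only that $m_\Phi|_{F_f}=m_f$), so before applying Proposition~\ref{prop:zeta_mult} you would need to check that your pieces are actually good subsets.
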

\begin{proof}
By \ref{prop:zeta_char}, it is enough to prove \ref{cor:eulerchar:2}.

Consider first the action of $m_\Phi$ on
$\T_{f,g} \setminus T_2$. Using a similar argument as in
\ref{ssec:pl_c}, we see that the zeta function of the restriction is the
same as that of the restriction to $F_f\cap \T_1 \setminus T_2$. An A'Campo
type argument shows that this zeta function is
\begin{equation} \label{eq:zeta_first}
  \prod_{w\in\V(\Gamma_1)} (1 - t^{m_w})^{\delta_{w,f}-2}.
\end{equation}

Consider now the set $\T_2 \cap \T_{f,g}$. It has the homotopy type of a
3-manifold with some solid tori removed. In particular,
$\chi(\T_2 \setminus T') = 0$.
We can now use the same proof as that of \ref{cor:main:W2} to see that
the zeta function of the restriction to $T_{f,g}\cap\T_2$
is $(\prod_{a\in\A_{g,2}} (1-t^{m_{w_a}})^{-1})$.

The intersection $(\T_{f,g} \setminus T_2) \cap (\T_{f,g}\cap\T_2$
is a disjoint union of circles which are cyclically permuted
by the monodromy. The zeta function of the monodromy restricted to these
circles is therefore $1$.

Finally, using \ref{prop:zeta_mult}, we can glue these zeta functions together
to get \ref{cor:eulerchar:2}.
\end{proof}

\begin{cor} \label{cor:main}
\begin{enumerate}[(i)]
\item \label{cor:main:W1}
	If $m_w \leq l_w$ for all $w\in\W(\Gamma)$, then $F$ and $F_{f,sing}$
	have the same homotopy type and $\zeta_\Phi = \zeta_f$.
\item \label{cor:main:W2}
If $m_w > l_w$ for all $w\in\W(\Gamma)$, then $F$ has the same homotopy
type as $\vee_{m-1} S^2$, where $m = \sum_{a\in \A_g(\Gamma_2)} m_{w_a}$.
The zeta function is given by 
$\zeta_\Phi(t) = \prod_{a\in\A_g} (1-t^{m_{w_a}})$.
\end{enumerate}
\end{cor}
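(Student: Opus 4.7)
Both parts follow directly from the main theorem~\ref{th:themain} combined with the zeta function formula of Corollary~\ref{cor:eulerchar}, applied to the two extreme regimes where one of $\W_1$, $\W_2$ is empty.

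For part~(i), the hypothesis $m_w \le l_w$ for every $w \in \W$ forces $\W_2 = \emptyset$, whence $\A_{g,2} = \emptyset$ and $T_2 = \emptyset$, so the definition collapses to $\T_{f,g} = (\T_{f,1} \setminus T') \cup \T_\epsilon$. Since the twistings used to pass from $\T_{f,g}$ to $F_{f,g}$ preserve homotopy type, the main theorem gives $F_\Phi \simeq \T_{f,g}$, and I would then identify $\T_{f,g}$ with $F_{f,sing}$ by a piecewise deformation retraction: $\T_\epsilon$ retracts onto $F_f$, each $\T_a \setminus T'$ for $a \in \A_f$ retracts onto the corresponding cone component of $B \subset F_{f,sing}$ from \S\ref{ssec:pl_c}, and the gluings along the covering circles $F_f \cap T_a \to K_a$ match. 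The zeta function formula of Corollary~\ref{cor:eulerchar} specialises to $\zeta_\Phi(t) = \prod_{w \in \W}(1 - t^{m_w})^{\delta_{w,f} - 2}$, which is precisely A'Campo's formula for $\zeta_f$ applied to $\phi$ viewed as a resolution of $f$ alone, since the arrowheads in $\A_g$ do not meet the zero set of $f \circ \phi$.

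For part~(ii), the hypothesis $m_w > l_w$ gives $\W_1 = \emptyset$, hence $\A_{g,2} = \A_g$, and Corollary~\ref{cor:eulerchar} yields
\[
\zeta_\Phi(t) = \prod_{a \in \A_g} (1 - t^{m_{w_a}})^{-1}, \qquad \chi(F_\Phi) = \sum_{a \in \A_g} m_{w_a} = m.
\]
To conclude $F_\Phi \simeq \bigvee_{m-1} S^2$, I would invoke Whitehead's theorem: once $F_\Phi$ is known to be simply connected, the fact that it has the homotopy type of a 2-dimensional CW complex (by Milnor) combined with $\chi = m$ forces $H_2 \cong \Z^{m-1}$ and the wedge structure. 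For simple connectivity, my plan is a van Kampen analysis of $\T_{f,g} = \T_\epsilon \cup (\T \setminus (T' \cup T_g))$: $\T_\epsilon$ retracts onto $F_f \simeq \bigvee_{\mu(f)} S^1$; the complementary piece retracts onto a 2-complex obtained from the exceptional divisor by puncturing at its intersections with the strict transform of $g$; and the 2-cells produced by the $l_a$-fold twists along the $m_{w_a}$ disks for each $a \in \A_g$ provide the relators that kill all remaining generators.

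The principal obstacle is establishing simple connectivity in part~(ii), since the Euler characteristic alone leaves open the possibility of a 2-complex with non-trivial $\pi_1$ and matching $\chi$. Matching the van Kampen relators contributed by the twistings against the loops coming from $F_f$ and from the link complement requires the full monodromy information from Theorem~\ref{th:themain}; the rank computation for $H_2$ is more routine and could alternatively be obtained by a direct Mayer--Vietoris argument on the decomposition of $\T_{f,g}$ above.
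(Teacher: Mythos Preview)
Your argument for part~(i) is essentially the paper's: both retract $\T_{f,g}$ onto $F_{f,sing}$ piece by piece and then invoke either the monodromy description or A'Campo's formula for the zeta function.

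For part~(ii), your zeta computation is a correct specialisation of Corollary~\ref{cor:eulerchar}; the exponent $-1$ you obtain is the right one (it gives $\chi=m$ via Proposition~\ref{prop:zeta_char}), so the discrepancy with the printed statement is a typo in the corollary, not an error on your side.

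The genuine gap is the homotopy type in part~(ii). Your plan to deduce simple connectivity by van~Kampen is only sketched, and matching the relators coming from the $l_a$-twists against the loops in $F_f$ and in the link complement is exactly the hard step; you flag this yourself as the principal obstacle and do not resolve it. The paper bypasses this difficulty with a single geometric observation you are missing: because $\Gamma$ is the resolution graph of the \emph{smooth} germ $(\C^2,0)$, the plumbed boundary $\partial\T$ is $S^3$, and $\T\setminus T'$ is just a collar $S^3\times I$. Deleting the tubes $\T_a$ for $a\in\A_g$ therefore yields, up to homotopy, $S^3$ with solid tori removed, and the attaching circles of the $m_{w_a}$ handles are meridians of these tori. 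Drilling a solid torus and then attaching a $2$-handle along a meridian has the same effect as removing a ball, so the end result is $S^3$ with $m$ balls deleted, i.e.\ $\bigvee_{m-1}S^2$. This direct description makes your van~Kampen/Whitehead route unnecessary.
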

\begin{proof}
In the case of \ref{cor:main:W1} we have $T_2 = \emptyset$. Twisting the
handles corresponding to
elements $a\in\A_g$ does not alter the homotopy type. 
Therefore, $F_\Phi$ has, by \ref{th:themain}, the same homotopy type as
$\T_\epsilon\cup\T_{f,1}$. Homotopically, this space is the same as
$F_f$, where we have glued the boundary components to some circles. This can
easily be seen as the same construction of $F_{f,sing}$.
Furthermore, this homotopy equivalence can be seen as invariant under
the actions of $m_\Phi$ and $m_f$, proving $\zeta_\Phi = \zeta_{f,sing}$
and thus $\zeta_\Phi = \zeta_f$ by \ref{prop:reg_sing}. For a second
proof of this statement, one may compare A'Campo's formula for
$\zeta_f$ with \ref{cor:eulerchar}.

In the case of \ref{cor:main:W2}, $\A_{f,1} = \emptyset$. We have
$A = \T_2 \setminus (T'\cup T_{g,2})$ and $\T_2 = \T$. Also, $\T\setminus \T'$
is homotopically just $\partial \T = S^3$ because the graph $\Gamma$ describes
a modification of the smooth germ $(\C^2,0)$. In fact, $\T\setminus T'$
is a collar neighbourhood around $\partial \T$, so
$\T\setminus T' = S^3\times I$. Furthermore, for $a\in\A_{g}$, the pair
$(\T\setminus T', \T_a \cap (\T\setminus T'))$ is isomorphic to the pair
$(S^3\times I, S\times I)$ where $S\subset S^3$ is a solid torus.
Therefore, $A$ is homotopically
$S^3$ with some solid tori removed, one for each element of $\A_g$. What's
more, the attaching spheres of the handles are meridians of these tori.
But removing a solid torus from a $3$ manifold and adding $m$ handles attached
to meridians is equivalent to removing $m$ spheres from the original
manifold. This gives the same as $\vee_{m-1} S^2$.

The statment about $\zeta_\Phi$ follows from \ref{cor:main}
%
\end{proof}

\begin{example}
Let $f(x,y) = x^d$ and $g = y^d$ where $d \geq 2$. Then we can choose the
resolution $V$ so that $\V$
has a single element, say $\V = \{v\}$. Then $m_v = l_v = d$, so we can
apply \ref{cor:main}a.
The Milnor fiber $F$ associated to $\Phi$ has the same homotopy type as
$F_{f,sing}$, which is up to 
homotopy a bouquet of $d-1$ two-spheres. Note that in spite of this, $\Phi$
is not isolated.
The zeta function of this singularity is $\zeta(t) = t^d-1$.

\end{example}

\section{Proof of theorem \ref{th:themain}} \label{s:proof}

To prove theorem \ref{th:themain} we project the embedded resolution
$V\times \C \to \C^3$ down to $V$, and study the image of the fiber $F'_\Phi$.
Denote the projection by $p$. Choose $r\in V$ with the property that
$g'(r) \neq 0$. Assume further that there exists a $z\in\C$ such that
$\Phi'(r,z) = \epsilon$. We can solve this equation for $z$, namely
\[
  z = \frac{\epsilon - f'(r)}{g'(r)}.
\]
This means that $p$ restricts to an injection
$F'_\Phi \setminus (\St_g\times\C) \to V$, where $\St_g$ is the strict transform
of $g$. Define a function
$Z:V\setminus \St_g \to \C$ by $Z(r) = (\epsilon - f'(r))/g'(r)$.
Instead of using the standard ball $B_6\subset \C^3$, we can assume that 
$F'_\phi$ is simply defined as the set of pairs $(r,z)\in V\times \C$ where
$r\in T$ and $|z| \leq \delta$. This way, we get a diffeomorphism
$F'_\Phi \setminus \St_g\times\C \to X$ where
\[
  X = \{r\in V \setminus \St_g : |Z(r)| \leq \delta\}.
\]
We obtain a description of $F_\Phi = F'_\Phi$ by considering the sets
$F'_\Phi \cap p^{-1}(\T\setminus T_g)$ and $F'_\Phi\cap p^{-1}(\T_g)$, and
how they glue together along their intersection.

\begin{thm} \label{th:reform}
The following items determine the Milnor fibre and the monodromy.
\begin{enumerate}[(i)]
\item \label{th:reform_strict_f}
	Let $e=(a,w)\in\E$ where $a\in\A_{f,1}$ and $w\in\W$.
	There is a diffeomorphism between $p(F'_\Phi)\cap\T_e$
	and $\T_e\setminus T'$ inducing identity on $F'_f\cap \partial \T_e$
	and its normal bundle in $\partial \T_e$.

	The set $F'_\Phi\cap p^{-1}(\T_e)$ is invariant under the monodromy,
	up to homotopy the monodromy action is trivial on this set.
\item \label{th:reform_T_1}
	The set $p(F'_\Phi)\cap[\T_1 \setminus (T_{f}\cup T'_{g} \cup T_2)]$
	is a tubular neighborhood around 
	$F'_f\cap[\T_1 \setminus (T_{f}\cup T'_{g} \cup T_2)]$ in
	$T_1 \setminus (T_{f}\cup T'_{g} \cup T_2)$.

	The set $F'_\Phi\cap p^{-1}[\T_1 \setminus (T_{f}\cup T'_{g} \cup T_2)]$
	is invariant under the monodromy. It can be chosen to coincide with
	$m_f$ on the subset
	$F'_f\cap [\T_1 \setminus (T_{f}\cup T'_{g} \cup T_2)]$ which is a
	strong homotopy retract.
\item \label{th:reform_T_2}
	There is a diffeomorphism between $p(F'_\Phi)\cap \T_2\setminus T'_g$
	and $\T_2\setminus (T'\cup T'_g)$ inducing identity on
	$F'_f\cap\partial(T'\cup T'_g)$ and its normal bundle in
	$\partial(T'\cup T'_g)$.
	This set is invariant under the monodromy; it's action is trivial up
	to homotopy.
\item \label{th:reform_strict_g}
	Let $e=(a,w)\in\E$ where $a\in\A_g$ and $w\in\W$.
	The set $p^{-1}(\T'_{g})\cap F'_{\Phi}$ is a disjoint union of $m_w$
	$4$ dimensional $2$-handles glued to the manifold
	$p(F'_\Phi) \setminus T'_g$. The attaching spheres are those boundary
	components of $F'_f\cap (\T\setminus T'_e)$ which are in $\T'_e$.
	The normal bundle of the attaching spheres has a canonical
	trivialisation since each component is the boundary of a disk in
	$T'_e$. The handles are attached with the $(-l_a)$-th framing.
	These handles are invariant under the monodromy, its action permutes
	them cyclically.
\end{enumerate}
\end{thm}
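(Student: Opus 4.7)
The plan is to establish the four items by a local coordinate analysis on $V$ adapted to the normal-crossing configuration of the exceptional divisor together with the strict transforms, and then note that the described pieces exhaust $F'_\Phi$ and so determine it. The key observation, already made in the paragraph preceding the theorem, is that $p$ restricts to an injection of $F'_\Phi \setminus (\St_g \times \C)$ onto $X = \{r \in V \setminus \St_g : |Z(r)| \leq \delta\}$, where $Z = (\epsilon - f')/g'$. Thus items (\ref{th:reform_strict_f}), (\ref{th:reform_T_1}), and (\ref{th:reform_T_2}) reduce to identifying the shape of $X$ in the respective regions, while item (\ref{th:reform_strict_g}) must be treated directly on $F'_\Phi$ near $\St_g$ because $p$ fails to be injective there.

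At each point of the exceptional divisor I would choose holomorphic coordinates $(u,v)$ in which $f' = u^a v^b U_1$ and $g' = u^c v^d U_2$ for units $U_1, U_2$, with exponents read off from the graph data. The condition $|\epsilon - f'| \leq \delta|g'|$ then becomes an elementary monomial inequality. For item (\ref{th:reform_T_1}), both $|f'|$ and $|g'|$ are non-negligible relative to $\epsilon$, so $X$ is a tubular neighborhood of $F'_f = \{f' = \epsilon\}$, and the monodromy, realized by rotating $\epsilon$, restricts to $m_f$. For item (\ref{th:reform_strict_f}), the inequality $m_w \leq l_w$ forces the solution locus $u^{\alpha_a} v^{m_w} \approx \epsilon$ to be nonempty for all small $v$ and to sweep out a set diffeomorphic to $\T_e \setminus T'$, the required identification on $F'_f \cap \partial \T_e$ being automatic from the construction; the monodromy is homotopic to the identity because it merely rotates the $u$-coordinate. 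For item (\ref{th:reform_T_2}) the opposite inequality $m_w > l_w$ makes $|f'|/|g'|$ blow up near $E_w$, so $|Z| \leq \delta$ fails only in a thin shell around $E_w$ (which becomes $T'$) and near $\St_g$ (which becomes $T'_g$); elsewhere in $\T_2$ it holds, yielding the claimed diffeomorphism, and again the monodromy acts trivially up to homotopy.

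For item (\ref{th:reform_strict_g}) I would work in $(u,v,z)$-coordinates near $E_a \cap E_w$ with $a \in \A_g$, where $f' = v^{m_w} U_1$ and $g' = u^{l_a} v^{l_w} U_2$. The equation $v^{m_w} U_1 + z u^{l_a} v^{l_w} U_2 = \epsilon$ admits, for each of the $m_w$ values of $v$ close to a solution of $v^{m_w} U_1(0,v) = \epsilon$, a one-parameter family of $(u,z)$-solutions described by $u^{l_a} z = c_v$ for small $c_v$. Each such family is a disk embedded in $T'_g$ with boundary circle on the previously-constructed part of the fibre, producing the claimed $m_w$ two-handles. The framing coefficient is computed by comparing the canonical product framing of the normal bundle inside $T'_e \cong D \times D$ with the framing inherited from parametrizing the handle as the graph $u \mapsto (u, c_v u^{-l_a})$; as $u$ traverses the boundary circle, $u^{-l_a}$ winds $-l_a$ times, yielding the $-l_a$ framing. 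The cyclic monodromy action on the handles follows because rotating $\epsilon$ once cyclically permutes the $m_w$ roots of $v^{m_w} = \epsilon$. The hard part will be the framing computation: even after the local model is written down, one must carefully relate the product trivialization of $T'_e$ to the embedding of $F'_\Phi$ in $V \times \C$ to pin down both the magnitude and the sign of the twist. The remaining items amount to routine monomial estimates.
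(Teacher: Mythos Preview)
Your plan is essentially the paper's: local monomial coordinates at each stratum of the normal-crossing divisor, then a case analysis governed by the sign of $m_w - l_w$, with part (\ref{th:reform_strict_g}) handled upstairs in $V\times\C$ and the framing read off from how the implicit solution winds. The paper carries this out with a few concrete devices you will want when filling in details: for (\ref{th:reform_strict_f}) it passes to a branched cover $(u,v)\mapsto (u,v^{n})$ so that the Morse--Bott analysis of $|Z|^2$ becomes linear in the new fibre coordinate; for (\ref{th:reform_T_1}) it verifies $\partial_u\Phi'\neq 0$ and invokes Ehresmann to get a trivial $D_\delta$-bundle over $F'_f$; for (\ref{th:reform_T_2}) it first normalises so that $|f'/g'|<\delta/2$ on $\T_2$ and then shows $|Z|^{-1}$ is Morse--Bott, treating the mixed edges $(w_1,w_2)\in\W_1\times\W_2$ and the collars $\T_{g,2}\setminus T'_{g,2}$ separately.

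One slip to fix in your sketch of (\ref{th:reform_T_2}): when $m_w>l_w$ the quotient $|f'/g'|$ does \emph{not} blow up near $E_w$---it vanishes, since $f'/g'\sim u^{m_w-l_w}$. What blows up is $|Z|=|(\epsilon-f')/g'|\approx \epsilon/|g'|$, and it is this that forces $p(F'_\Phi)$ to miss a thin tube $T'$ around the exceptional divisor. Your conclusion is right; only the stated mechanism is inverted. Also, in (\ref{th:reform_strict_g}) your graph parametrisation $u\mapsto (u,c_v u^{-l_a})$ breaks down at $u=0$; the paper instead uses $(v,z)$ as global handle coordinates and solves implicitly for $u=U(v,z)$, which makes the bidisk structure and the twisting coefficient transparent.
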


\begin{proof}[Proof of \ref{th:reform_strict_f}]
We can choose coordinates around the point $E_w\cap E_a$ so that
$f'(u,v) = u^m v^n$ and $g(u,v) = u^l$, where $m=m_w$, $n=m_a$ and $l=l_w$.
We can also suppose that $\T_e = \{(u,v) : |u|,|v| \leq \rho \}$ where
$\rho$ is some number so that $\epsilon\ll\rho$.
By choices made, we have $m\leq l$ and $n>0$.

Consider the space $\tilde T_e = \{(u,\tilde v): |u|,|\tilde v|^{1/n} < \rho\}$
and the map $\pi_e:\T_e\to \tilde T_e$ given by
$(u,v) \mapsto (u,\tilde v) = (u,v^n)$. We have then maps
\[
  Z_e(u, v) = \frac{u^m v^n - \epsilon}{u^l}, \quad
  \tilde Z_e(u,\tilde v) = \frac{u^m \tilde v - \epsilon}{u^l}
\]
satisfying $\tilde Z_e \circ \pi_e = Z_e$.
The function $|\tilde Z_e|^2$ has the divisor $u^m \tilde v = \epsilon$ as
a nondegenerate critical manifold of index $0$. This holds on
$\tilde T_e$ as well as $\partial \tilde T_e$. We will show that
$|\tilde Z_e|^2$ has no other critical manifolds (in the interior or the
boundary) in the preimage $|\tilde Z_e|^2 \leq \delta^2$. This will show that
the set $\tilde F_e = \pi_e(p(F_\Phi)\cap T_e)$ is a tubular
neighbourhood around the submanifold given by $u^m\tilde v = \epsilon$,
that is, $\pi_e(F_f)$. Note first that the coordinate $u$ takes
nonzero values on $\tilde F_e$, since $Z_e$ has a pole along the
exceptional divisor. We have then
$\partial_{\tilde v} Z_e(u,\tilde v) = u^{m-l} \neq 0$ on $\tilde F_e$.
This shows that $|\tilde Z_e|^2$ has no critical points in the interior
$\tilde T_e$, nor on the part of the boundary given by $|u|=\rho$.
For the rest of the boundary, we will show that if $|\tilde v|=\rho^n$,
then $\partial_u Z_e \neq 0$. But we have
\[
  \partial_u \tilde Z_e = (m-l)u^{m-l-1}\tilde v + lu^{l-1}\epsilon
    = ((m-l)u^m \tilde v + \epsilon l) u^{l-1}.
\]
If $m = l$, then this shows that the partial derivative does not vanish.
Assuming $m<l$ we find that $\tilde Z_e(u,\tilde v) = 0$ implies
$u^m = -\epsilon l / ((m-l)\tilde v)$. This implies
\[
  |\tilde Z_e(u,\tilde v)|
    = \frac{|-\frac{\epsilon l}{(m-l)\tilde v} - \epsilon|}
	    {|-\frac{\epsilon l}{(m-l)\tilde v}|^{l/m}}
    = \left| \frac{l}{(m-l)\rho^n}-1 \right|
	  \left| \frac{(m-l)\rho}{l}\right|^{-l/m}
	  \epsilon^{1-l/m},
\]
so that $|Z_e(u,\tilde v)|$ is huge, since $\epsilon$ is small and $l>m$.
In particular, $|\tilde Z_e| > \delta$.

We have now showed that $\tilde F_e$ is a tubular neighbourhood around
the divisor $u^m \tilde v = \epsilon$. But the same is true about the set
$\tilde T_e \setminus \pi_e(T')$. Thus, we have a diffeomorphism
$\tilde\psi_e:\tilde F_e \to \tilde T_e \setminus \pi_e(T')$ and we can assume
that $\tilde\psi_e$ equals the identity on a small neighbourhood around
the divisor $u_m\tilde v = \epsilon$.
Now, the set $\{\tilde v = 0\} \cap \tilde F_e$ is an annulus given by
$|u| \geq |\epsilon/\delta|^{1/l}$. One can now easily see that the
map $\tilde\psi_e$ can also be chosen to map this annulus into
$\pi_e(E_a) = \{\tilde v = 0\}$.
Finally, by considering the symmetries of $\tilde F_e$ and $\tilde T_e$,
one can assume that $\tilde\psi_e$ commutes with multiplying $\tilde v$
by a primitive $n$th root of unity. This, combined with the fact that
$F_e = \pi_e^{-1}(\tilde F_e)$, shows that $\tilde \psi_e$ transfers
to a diffeomorphism $\psi_e:F_e\to T_e$.
\end{proof}

\begin{lemma} \label{lem:T_1_fibers}
The map $F'_\Phi \cap [(\T_1\setminus(T_f\cup T_2))\times\C] \to\bar D_\delta$,
$(r,z)\mapsto z$
is proper, with surjective derivative everywhere. The same holds for its
restriction to the boundary.
\end{lemma}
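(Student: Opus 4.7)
The plan has two parts: properness follows from compactness, while surjectivity of the derivative is verified by a local case analysis on the strata of the exceptional divisor, using the multiplicity inequality $m_w \le l_w$ that defines $\W_1$.

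For properness, recall that by the normalization fixed earlier in the section, $F'_\Phi$ is contained in $T\times\bar D_\delta$, where $T$ is compact. Since $\T_1\setminus(T_f\cup T_2)$ is closed in $T$, the set
\[
F'_\Phi \cap \bigl[(\T_1\setminus(T_f\cup T_2))\times\C\bigr]
  = F'_\Phi \cap \bigl[(\T_1\setminus(T_f\cup T_2))\times\bar D_\delta\bigr]
\]
is closed in the compact space $T\times\bar D_\delta$, hence compact. A continuous map from a compact Hausdorff space to a Hausdorff space is automatically proper, and the same remark applies to its restriction to any closed subset, in particular to the boundary.

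For the derivative, I first reformulate the obstruction. Since $\Phi' = f'+zg'$ cuts out $F'_\Phi$, we have $T_{(r,z)}F'_\Phi = \ker\bigl(df'_r + z\,dg'_r + g'(r)\,dz\bigr)$, and the projection's differential is simply $dz$. Therefore $d\pi|_{TF'_\Phi}$ fails to be surjective exactly when $dz$ is a multiple of $d\Phi'$, i.e.\ when $df'_r + z\,dg'_r = 0$ while $g'(r)\neq 0$. I plan to rule out this combination on the domain by working stratum by stratum. Because $T_f$ is removed, only three local coordinate models $(u,v)$ appear, with $f',g'$ monomials: (a) the interior of $E_w$ with $w\in\W_1$, $f'=u^m$, $g'=u^l$, $m\le l$; (b) a crossing $E_w\cap E_{w'}$ with $w,w'\in\W_1$, $f'=u^m v^{m^*}$, $g'=u^l v^{l^*}$, $m\le l$, $m^*\le l^*$; (c) a crossing $E_w\cap E_a$ with $a\in\A_g$, $f'=u^m$, $g'=u^l v$.

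In every model $f'+zg'=\epsilon\neq 0$ forces $u,v\neq 0$, so the vanishing of $df'+z\,dg'$ becomes an algebraic system. In (c) it immediately reduces to $zu^l=0$, forcing $z=0$ and then $u=0$, a contradiction. In (a) the single equation gives $zu^{l-m}=-m/l$; substituting into the Milnor equation yields either a flat contradiction when $m=l$ or the bound $|z|\sim\epsilon^{-(l-m)/m}$, which exceeds $\delta$ once $\epsilon$ is chosen small enough. Case (b) is analogous: the two components of the derivative jointly force the compatibility $ml^*=m^*l$, after which a careful substitution in the Milnor equation again produces either a contradiction (when $m=l$, $m^*=l^*$) or an estimate $|z|\sim\epsilon^{-c}$ with $c>0$. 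Hence no "bad" points survive in our domain. The boundary variant follows by running the same local computation on the faces $|u|=\rho$ or $|v|=\rho$ bounding the tubular neighborhoods, since those faces impose only additional constraints on tangent directions and do not introduce new critical loci. The main obstacle in executing the plan is the bookkeeping in case (b), where the interplay of the two exponent pairs $(m,l)$ and $(m^*,l^*)$ must be tracked through the substitution to produce the estimate on $|z|$.
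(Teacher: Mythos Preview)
Your strategy coincides with the paper's---properness by compactness, then a local case analysis on $\T_1\setminus(T_f\cup T_2)$ using the multiplicity inequalities---but your local models omit a unit that cannot in general be eliminated. In a simultaneous embedded resolution one may choose coordinates so that $f'=u^m v^n$, but then $g'$ is only of the form $\alpha(u,v)\,u^l v^k$ for some nonvanishing holomorphic $\alpha$; the two pullbacks cannot both be made monomial in the same chart. With $\alpha$ present, the system $df'+z\,dg'=0$ picks up terms in $\partial_u\alpha$ and $\partial_v\alpha$: the clean compatibility $ml^*=m^*l$ in your case~(b) no longer follows, and the substitution producing $|z|\sim\epsilon^{-c}$ does not go through as written. (Your case~(c) also tacitly assumes $l_a=1$ and no unit, though there the damage is minor.)

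The paper sidesteps this by checking something stronger yet simpler: rather than forcing the full $1$-form $df'+z\,dg'$ to vanish and deriving a contradiction, it shows directly that the single partial derivative $\partial_u\Phi'$ is nonzero on the fibre. With $f'=u^m v^n$ and $g'=\alpha u^l v^k$ (and $m\le l$, $n\le k$) one has
\[
  \partial_u\Phi'=u^{m-1}v^n\bigl(m+z\,u^{l-m}v^{k-n}(u\,\partial_u\alpha+l\alpha)\bigr);
\]
the factor $u^{l-m}v^{k-n}(u\,\partial_u\alpha+l\alpha)$ is bounded on the compact chart, so the bracket is close to $m\neq 0$ simply because $|z|\le\delta$ and $\delta$ is small. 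This argument uses only the smallness of $\delta$ (not of $\epsilon$), absorbs the unit with no extra effort, and immediately gives that $(v,z)$ are local coordinates on $F'_\Phi$, hence $dz$ is a submersion there. Your approach can be repaired along these lines, but as it stands the missing unit leaves cases~(a) and especially~(b) incomplete.
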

\begin{proof}
The map is proper, since its domain is compact. The surjectivity of the
derivative requires more attention:

Let $(r_0,z_0) \in F'_\Phi \cap [(\T_1\setminus(T_f\cup T_2))\times\C]$. Then,
we have three cases: in the first, there is a unique $w\in\W_1$ so that
$r_0\in T_w$. Secondly, there might be exactly
two elements $w,w'\in\W_1$ such that $r_0\in T_w\cap T_{w'}$. Thirdly, we might
have $r_0\in T_e$ for some $e=(w,a)\in\E$ for some $w\in\W_1$ and
$a\in\A_{g,1}$.
In any case, we can find a coordinates $u,v$ in a neighbourhood $U$
around $r_0$ in $V$ such that
$f'(u,v) = u^m v^l$ and $g'(u,v) = \alpha u^l v^k$ for $m=m_w$, $l=l_w$
and some non-vanishing function $\alpha:U\to\C$.
We have $m \leq l$, and one out of three, depending on the cases above:
$n=k=0$, $n\leq k$ or $n=0$,$k>0$. In any case, we have $n\leq k$.

By the inverse function theorem, the map $F'_\Phi\cap U \to \C^2$,
$(u,v,z) \to (v,z)$ is a coordinate chart, provided that
$\partial_u\Phi' \neq 0$ on $F'_\Phi\cap U$. We have
\[
\begin{split}
  \partial_u\Phi'(u,v,z) &= \partial_u(u^m v^n + z\alpha u^l v^k)
     = mu^{m-1}v^n + z((\partial_u\alpha) u^lv^k + \alpha l u^{l-1}v^k) \\
    &= u^{m-1} v^n(m+zu^{l-m} v^{k-n} (\partial_u\alpha u + \alpha l)).
\end{split}
\]
The function $u^{l-m} v^{k-n} (\partial_u\alpha u + \alpha l)$ is continuous,
and therefore bounded on $U$ (we can assume that $U$ is relatively compact).
Since $|z|\leq\delta$, we get
\[
  |zu^{l-m} v^{k-n} (\partial_u\alpha u + \alpha l)| \ll m.
\]
proving that $\partial_u \Phi' \neq 0$ on $F'_\Phi \cap (U\times\C)$.
Therefore, the function $z$ is a part of a coordinate system around
$(r_0,z_0)$. In particular, its derivative is surjective.

For the last statement, the same reasoning applies; the equation
$\partial_u \Phi' \neq 0$ implies that $z$ (as two real variables) gives part
of a coordinate system on the boundary. We omit the details.
\end{proof}

\begin{proof}[Proof of \ref{th:reform_T_1}]
The argument in the proof of \ref{lem:T_1_fibers} can be transferred directly
to the boundary components $p(F'_\Phi)\cap \partial \T_{g,1}$. We can therefore
use Ehresmann's fibration
theorem to get that the restriction of $Z$ to the set
$p(F'_\Phi)\cap[\T_1\setminus(T_f\cup T_g \cup T_2)]$ is a locally trivial
fibration over $D_\delta$. Since $D_\delta$ is contractible, this fibration
is trivial. The fiber over $0\in D_\delta$ is simply
$F'_f\cap [\T_1\setminus(T_f\cup T_g \cup T_2)]$.
Therefore, the set $p(F'_\Phi)\cap[\T_1\setminus(T_f\cup T_g \cup T_2)]$ is
a product $F'_f\cap [\T_1\setminus(T_f\cup T_g \cup T_2)]\times D$.
This proves the statement.
\end{proof}

\begin{lemma}\label{lem:g/f}
We may assume that the inequality $|g/f| < \delta/2$ holds in
$\T_2$.
\end{lemma}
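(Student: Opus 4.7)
The plan is to exploit the remaining freedom in the choice of $T'$ (which, following the paper's order of choices, is fixed only after $\delta$ and $\epsilon$) in order to absorb the portion of $\T_2$ on which $|g/f|$ is large. The crucial point is that $g'/f'$ is a meromorphic function on $V$ whose polar divisor meets $\T_2$ in precisely $\bigcup_{w\in\W_2} E_w$, since by definition of $\W_2$ we have $m_w>l_w$ there. Thus the inequality $|g/f|<\delta/2$ cannot hold in any fixed open neighborhood of those exceptional components, and the statement must be read as applying to the region $\T_2\setminus T'$ which is the piece of $\T_2$ actually entering $\T_{f,g}$; the substance of the lemma is then that $T'$ can be enlarged to force the bound there.

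First I would compute $g'/f'$ in local coordinates on $\T_w$ for $w\in\W_2$. Taking $E_w=\{u=0\}$ and (near a crossing) an adjacent exceptional curve $E_{w'}=\{v=0\}$, the functions satisfy $f'=u^{m_w}v^{m_{w'}}\alpha$ and $g'=u^{l_w}v^{l_{w'}}\beta$ for nonvanishing $\alpha,\beta$, and hence
\[
  \frac{g'}{f'} = u^{l_w-m_w}\,v^{l_{w'}-m_{w'}}\,\gamma
\]
with $\gamma$ a unit and $l_w-m_w<0$. Consequently the set $A_\delta:=\{r\in\T_2:|g'(r)/f'(r)|\geq \delta/2\}$ is a compact neighborhood of $\bigcup_{w\in\W_2} E_w$ inside $\T_2$ whose normal thickness around each $E_w$ is of order $\delta^{-1/(m_w-l_w)}$, up to a geometric constant coming from the units. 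I would then enlarge $T'$ so as to contain $A_\delta$; on the complement $\T_2\setminus T'$ the inequality $|g/f|<\delta/2$ holds by construction.

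The hard part is verifying that this enlargement is still compatible with the previously imposed condition $T'\cap\T_\epsilon=\emptyset$. In local coordinates, $\T_\epsilon$ lies at a normal distance from $E_w$ of order $\epsilon^{1/m_w}$, while the required thickness of $T'$ near $E_w$ is of order $\delta^{-1/(m_w-l_w)}$ times a constant depending on the graph. Compatibility is therefore a bookkeeping question relating $\epsilon$, $\delta$ and the multiplicities $m_w,l_w$: one must refine the loose prescription $0<\epsilon\ll\delta\ll1$ into an explicit hierarchy in which $\epsilon$, chosen after $\delta$, is small enough to keep $\T_\epsilon$ outside the enlarged $T'$ for every $w\in\W_2$. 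Since the collection of such constraints is finite and indexed by the graph $\Gamma$, it can always be realized, and this is the only nontrivial step in the proof.
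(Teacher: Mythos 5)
Your proposal goes off track at the very first step, and the misstep is instructive. You read the inequality literally as a bound on $|g'/f'|$, correctly observed that $g'/f'$ has poles along $E_w$ for $w\in\W_2$ (since $l_w-m_w<0$), and then tried to salvage the statement by excising a neighborhood $T'$ of the exceptional divisor. But the way the lemma is actually invoked (in the proof of part (iii) of Theorem \ref{th:reform}, where it supplies ``$|f'(r)/g'(r)|\leq\delta/2$'') shows that the intended inequality is $|f/g|<\delta/2$; the statement as printed has $f$ and $g$ transposed. With the roles corrected, the situation is the opposite of what you describe: on $\T_2$ one has $m_w>l_w$ by definition of $\W_2$, so $f'/g'$ \emph{vanishes} along $E\cap\T_2$, and the bound is achievable on essentially all of $\T_2$ (away from the strict transform of $g$) with no excision at all. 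Your salvage also fails on its own terms quantitatively: the set where $|g'/f'|\geq\delta/2$ near $E_w$ is $\{|u|\lesssim(2/\delta)^{1/(m_w-l_w)}\}$, and since $\delta<1$ this radius exceeds $1$ --- you even write the thickness as $\delta^{-1/(m_w-l_w)}$, which blows up as $\delta\to0$. Enlarging $T'$ to contain that set would swallow all of $\T_2$ and trivialize the construction, and no hierarchy of choices of $\epsilon$ versus $\delta$ repairs this.

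You also miss the one genuine idea in the paper's proof, which is how the phrase ``we may assume'' is cashed in. Since $f'/g'$ vanishes to order at least one along every component of $E$ meeting $\T_2$, and the pullbacks $x',y'$ of generic coordinates vanish to order one along $E$ with $|x'|,|y'|\leq\delta$, one gets $|f'/g'|\leq C\|(x',y')\|\leq C\sqrt{2}\,\delta$ on $\T_2$ for some constant $C>0$ depending only on the resolution. The constant is then absorbed by replacing $f$ with $C'^{-1}f$ for a suitable $C'$: this produces an \emph{equivalent} singularity, because $C'^{-1}f+zg=C'^{-1}(f+C'zg)$ and $f+C'zg$ is obtained from $f+zg$ by the coordinate change $(x,y,z)\mapsto(x,y,C'z)$, which changes neither the resolution data nor the Milnor fibration. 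That normalization step, not a modification of $T'$, is the entire content of the lemma.
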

\begin{proof}
Let $x,y$ be some generically chosen coordinates on $\C^2$. In $\T_1$ (in all
of $V$, for that matter) we have $|x'|,|y'| \leq \delta$, where $x',y'$ are
the pullbacks of $x,y$. In $T_2$, the function $f'/g'$ vanishes along
$E\cap\T_2$ by definition of $\W_2$. Since $x'$ and $y'$ vanish with order one
along $E$ we have $|f'/g'| \leq C \|(x',y')\|$ in $\T_2$ for some $C>0$.
Multiplying $f$ with $C^{-1}$, however, gives an equivalent singularity
because the germ $C^{-1}f + zg = C^{-1}(f+Czg)$ is equivalent with the germ
$f+zg$ via the coordinate change $(x,y,z) \leftrightarrow (x,y,Cz)$.
\end{proof}

\begin{proof}[Proof of \ref{th:reform_T_2}]
We start by investigating the intersection of $p(F'_\Phi)$ with the smaller set
$\T_2\setminus(T_1\cup T_g)$. The remaining parts will be considered separately.

As before, we have
\[
  p(F'_\Phi)\cap \T_2\setminus(T_1\cup T_g)
    = \{r\in\T_2\setminus(T_1\cup T_g) : |Z(r)| \leq \delta \}.
\]
We will start by showing that $|Z|^{-1}$ is a Morse-Bott function
in $\T_2\setminus(T_1\cup T_g)$ which defines a small tubular neighbourhood
around the exceptional divisor. More precisely, let
\[
  N = \{r\in\T_2\setminus(T_1\cup T_g) : |Z(r)|^{-1} < \delta^{-1} \}.
\]
We will prove that $N$ is a tubular neighbourhood around the exceptional
divisor in $\T_2\setminus(T_1\cup T_g)$, and that it can be made arbitrarily
small by shrinking $\epsilon$. The restriction of $g'$ to
$\T_2\setminus(T_1\cup T_g)$ is a holomorphic function vanishing exactly on
the exceptional divisor. Therefore, the set
$\{ r\in\T_2\setminus(T_1\cup T_g) : |g'(r)| \geq 2\epsilon/\delta\}$
is the complement of a small neighbourhood around the exceptional divisor.
If $r\in\T_2\setminus(T_1\cup T_g)$ satisfies $|g'(r)| \geq 2\epsilon/\delta$,
we get
\[
  |Z(r)| = \left|\frac{f'(r)-\epsilon}{g'(r)}\right|
    \leq \left|\frac{\epsilon}{g'(r)}\right|
      + \left|\frac{f'(r)}{g'(r)}\right|.
\]
By the choice of $r$, we have $\epsilon/|g'(r)| < \delta/2$. By \ref{lem:g/f},
we also have $|f'(r)/g'(r)| \leq \delta/2$. Therefore, we get
$|Z(r)| \leq \delta$. We have proven
\[
  N \subset N' := \{r\in V : |g'(r)| \leq 2\epsilon/\delta\}
      \setminus (T_1\cup T_g).
\]
The set $N'$ above can be made arbitrarily small, as a neighbourhood around
the exceptional divisor. To show that $N$ is a tubular neighbourhood, we will
prove that the derivative of $Z$ does not vanish in $N'$ outside the exceptional
divisor. Choose coordinates $u,v$ around $r\in V$ such
that $f'(u,v) = u^m v^l$ and $g'(u,v) = \alpha u^l v^k$ where $m=m_w$, $l=l_w$
for some $w\in\W_2$ for which $r\in \T_w$ and either there is a $w'\in\W_2$
so that $n=m_{w'}$ and $k=l_{w'}$, or $n=l=0$. In any case, we have $m>l$
and $n\geq k$.
We calculate:
\[
  \partial_u Z(r) = \frac{\partial}{\partial u}
      \frac{u^m v^n - \epsilon}{\alpha u^l v^k}
    = \frac{mu^{m-1}v^n \alpha u^l v^k
        - (u^m v^n - \epsilon)( (\partial_u\alpha)u^l - \alpha lu^{l-1} )v^k}
	  {(\alpha u^l v^k)^2}.
\]
Simplifying, we get $\partial_u Z(r) \neq 0$ if and only if
\begin{equation} \label{eq:pf_of_iii_a}
  m u^m v^n \alpha - (u^m v^n - \epsilon)((\partial_u\alpha)u - \alpha l)
    \neq 0.
\end{equation}
By assumption, we have $|Z(r)| > \delta$, that is,
$|u^m v^n - \epsilon| > \delta |\alpha u^l v^k|$.
Thus, we prove \ref{eq:pf_of_iii_a} by showing that
\begin{equation} \label{eq:pf_of_iii_b}
  |m u^{m-l} v^{n-k}| < \delta|(\partial_u\alpha)u - \alpha l|.
\end{equation}
The number $|(\partial_u\alpha)u - \alpha l|$ is bounded below independent of
$\delta$ and $\epsilon$, because $|u|$ is small with respect to $\alpha l$,
which is bounded below. The functions $|u^{m-l} v^{n-k}|$ and $g'(u,v)$ have
the same zero set, thus there is a $C,\gamma\in\R_+$ so that
$|mu^{m-n} v^{l-k}| < C |g'(u,v)|^\gamma
\leq C(2\epsilon/\delta)^\gamma \ll \delta$. This proves \ref{eq:pf_of_iii_b}.
Hence, the set $N$ is a tubular neighborhood around the exceptional divisor
in $\T_2\setminus(T_2\cup T_g)$.

Now consider an edge $e=(w_1, w_2) \in \E$ where $w_i\in\W_i$. We want to prove
that there is a diffeomorphism between$T_e\cap p(F'_\Phi)$ and
$\T_e\setminus T'$
fixing the intersection $F'_f\cap \partial \T_e$ and its normal bundle
inside $\partial \T_e$.

Consider coordinates $u,v$ on $T_e$ so that $f=u^m v^n$ and $g=u^l v^k$.
Let $\tau_1, \tau_2\in\C$ with $|\tau_1| = 1$. Then the set
$\{(u,v) \in\T_e :\Arg(u) = \tau_1,\, \Arg(v) = \tau_2,\,|Z(u,v)|\leq \delta\}$
is a disk. In fact:
\begin{itemize}
\item
	If $\tau_1^m\tau_2^n \neq 1$, then $|Z|^2$ restricts to a Morse
	function on the manifold
	$\{(u,v):\Arg(u)=\tau_1,\,\Arg(v) = \tau_2\}$. There are no critical
	points on the interior. Restricting $Z$ to the boundary of this
	submanifold, we get exactly one critical point with index zero
	and at most one with index one.
\item
	If $\tau_1^m\tau_2^n = 1$, then$|Z|^2$ restricts to a Morse-Bott
	function on the submanifold
	$\{(u,v):\Arg(u)=\tau_1,\,\Arg(v) = \tau_2\}$, the critical set being
	the intersection with $F'_f$.
\end{itemize}
Proving these two statements is a simple exercise, it boils down to showing that
certain partial derivatives do not vanish. The results show that each fiber
of the argument map $(\tau_1, \tau_2)$ is abstractly a disk. One is therefore
free to choose a diffeomorphism from this disk to the set of points $(u,v)$
where the argument of each coordinate is fixed, to the set of points with
corresponding arguments in $\T_e\subset T'$. This can be done in such a way that
we get a diffeomorphism with the desired properties.

The last thing we need to consider is the set
$p(F'_\Phi)\cap (\T_{g,2}\setminus T'_{g,2})$. Let $a$ be in $\A_{g,2}$. We
have local coordinates $u,v$ on $T_a$ so that $f = u^m$ and $g = u^l v^k$,
where $m=m_{w_a}$, $l=l_{m_w}$ and $k=m_a$. The set
$p(F'_\Phi)\cap (\T_a\setminus T'_a)$ can be given by equations
$|Z| \leq \delta$ and $|v| \geq \eta$ for some $\eta \ll \epsilon$, that is,
\[
  \left\{ (u,v) \in\T_a :
         \left|\frac{u^m - \epsilon}{u^l v^k}\right| \leq \delta,\,
         |v| \geq \eta \right\}
\]
We proved already, that the intersection $p(F'_\Phi)\cap \{|v| = \rho_a\}$ is
the complement of a tubular neighbourhood around the exceptional divisor
in the set $\{|v| = \rho_a\}$. Take a point 
$(u_0,v_0)\in p(F'_\Phi)\cap \T_a\setminus T'_a$.
From the formula $Z(u,v) = (u^m-\epsilon)/(u^l v^k)$ we see that
the segment between $(u_0,v_0)$ and $(u_0,(\rho_a/|v_0|) v_0)$ is contained in
$p(F'_\Phi)\cap (\T_a\setminus T'_a)$. From this, one quickly observes that
the inclusion $p(F'_\Phi)\cap (\T_a\setminus T'_a)\to \T_a\setminus T'_a$
is isotopic to the inclusion of $(\T_a\setminus T'_a)\setminus T'$ fixing
a neighborhood around both $F'_f\cap (\T_a\setminus T'_a)$ and 
$\{|v| = \rho_a\}$.

Finally, all these diffeomorphisms glue together to the desired map.

For the monodromy, we notice that the diffeomorphism type of the pair
$(F'_{\Phi,\theta},p^{-1}(\T_2\setminus (T_1\cup T_g)) \cap F'_{\Phi,\theta})$,
where $\theta \in S^1$ and $F'_{\Phi,\theta} = \Phi'^{-1}(\theta \epsilon)$,
is independent of $\theta$, that is,
$p^{-1}(\T_2\setminus (T_1\cup T_g)) \cap F'_{\Phi,\theta}$ is a subbundle
of the Milnor fibration. The description of this fibre above is independent
of $\theta$ however, and therefore gives a trivialisation of the bundle.
Therefore, the monodromy acts trivially, up to homotopy, on this subset.
\end{proof}

\begin{proof}[Proof of \ref{th:reform_strict_g}]
Let $a\in\A_g$. As before, we consider coordinates $u,v$ on $\T_a$ so that
$f'=u^m$ and $g'=u^l v^k$. Then $H_a := F'_\Phi\cap p^{-1}(\T'_a)$ is the set
of points $(u,v,z)$ satisfying $|z| \leq \delta$, $|v| \leq \eta$ for some
$\eta \ll \epsilon$ and the equality $\Phi' = \epsilon$.
We show first that abstractly, this set is a disjoint union of bidisks.
Clearly, the map $\pi_a = (v,z):H_a \to D_\eta \times D_\delta$ is a proper
surjection
which maps boundary points to boundary points. Also, the preimage of $(0,0)$
is the set $\{(u,0,0):u^m = \epsilon\}$, and so contains exactly $m$ points.
By the implicit function theorem, if $\partial_u \Phi' \neq 0$ on $H$, the map
$\pi_a$ is a local diffeomorphism, and so a covering map. Furthermore,
since the bidisk is contractible, such a covering map must be a product. We get
\[
  \partial \Phi'(u,v,z) = \partial_u(u^m + zu^l v^k) = m u^{m-1} + zlu^{l-1}v^k.
\]
The functions $|m u^{m-1}|$ is bounded below by a positive number on $H_a$,
since it is continuous and does not vanish. Similarly, the function
$|zlu^{l-1}|$ is bounded above. Taking $\eta$ small enough, we get
$|m u^{m-1}| > |zlu^{l-1}v^k|$ on $H_a$. This gives $\partial_u\Phi'\neq 0$
as required.

We have now shown that $F'_\Phi$ is given by glueing handles (such as $h$) to
$T_{f,g}\setminus T_g'$ in the way described in
\ref{pre:framings}. We only have to determine the twisting coefficient.
We already have a parametrization of the handle $h$ by $(u,v)$. 
The handle already contained in $T_{f,g}$ is parametrized by $(u-\xi,v)$,
where $\xi$ is some $m$-th root of unity. Denote
this parametrization by $\psi:\bar D\times\bar D\to T_{f,g}$.

Now, for any $r\in h$ with coordinates $(z,v)$ we have
$p(r) = (U(z,v), v)$ where
\[
  z U(z,v)^l v^k = U(z,v)^m - \epsilon
\]
and we assume that $U(z,v)$ is in a small neighbourhood around some $m$-th
root of $\epsilon$. This shows that the twisting coefficient used to glue
$h$ is $k$, as stated.

To finish the proof, we must consider the action of the monodromy on the
handles corresponding to $a\in\A_{g,2}$. But the central disks of these
handles are given by $F'_f$. It follows that they are permuted cyclically.
\end{proof}

\footnotesize
\noindent\textit{Acknowledgments.}
The author is supported by the PhD program of the CEU, Budapest and
by the `Lend\"ulet' and ERC program `LTDBud' at R\'enyi Institute.

\bibliographystyle{plain}
\bibliography{bibliography}
\end{document}